\colorlet{shadecolor}{gray!40}
\theoremstyle{plain}
\newtheorem{theorem}{Theorem}
\newtheorem{proposition}{Proposition}
\newtheorem{corollary}{Corollary}
\newtheorem{lemma}{Lemma}
\theoremstyle{definition}
\newtheorem{definition}{Definition}
\newtheorem{example}{Example}
\theoremstyle{remark}
\newtheorem{remark}{Remark}
\newcommand{\id}{\text{id}}
\newcommand{\ini}{\text{in}}
\def\NZQ{\mathbb}               % the font for N,Z,Q,R,C
\def\CC{{\NZQ C}}
\def\frk{\mathfrak}               % font for "Fraktur"
\def\a{{\bf a}}
\def\Phi{{\frk n}}
\def\Phi{{\frk N}}
\def\A{{\mathcal A}}
\def\opn#1#2{\def#1{\operatorname{#2}}} % to make operators
\opn\chara{char} \opn\length{\ell} \opn\pd{pd} \opn\rk{rk}
\opn\projdim{proj\,dim} \opn\injdim{inj\,dim} \opn\rank{rank}
\opn\depth{depth} \opn\grade{grade} \opn\height{height}
\opn\embdim{emb\,dim} \opn\codim{codim}
\opn\Tr{Tr} \opn\bigrank{big\,rank}
\opn\superheight{superheight}\opn\lcm{lcm}
\opn\trdeg{tr\,deg}%\emph{
\opn\reg{reg} \opn\lreg{lreg} \opn\ini{in} \opn\lpd{lpd}
\opn\size{size} \opn\sdepth{sdepth}
\opn\link{link}\opn\fdepth{fdepth}\opn\lex{lex}
\opn\LM{LM}
\opn\LC{LC}
\opn\NF{NF}
\opn\Merge{Merge}
\opn\sgn{sgn}
\opn\Diag{Diag}
\opn\div{div} \opn\Div{Div} \opn\cl{cl} \opn\Pic{Pic}
\opn\Prin{Prin}
\opn\op{op}
\opn\indeg{indeg} \opn\outdeg{outdeg}
\opn\Spec{Spec} \opn\Supp{Supp} \opn\supp{supp} \opn\Sing{Sing}
\opn\Ass{Ass} \opn\Min{Min}\opn\Mon{Mon} \opn\val{val}
\opn\Ann{Ann} \opn\Rad{Rad} \opn\Soc{Soc}
 \opn\Ker{Ker} \opn\Coker{Coker} \opn\Am{Am}
\opn\Hom{Hom} \opn\Tor{Tor} \opn\Ext{Ext} \opn\End{End}
\opn\Aut{Aut} \opn\id{id}
\opn\nat{nat}
\opn\pff{pf}%   \pf exists already
\opn\Pf{Pf} \opn\GL{GL} \opn\SL{SL} \opn\mod{mod} \opn\ord{ord}
\opn\Gin{Gin} \opn\Hilb{Hilb}\opn\sort{sort}
\opn\Image{Image}
\opn\vol{Vol}
\opn\aff{aff} \opn\con{conv} \opn\relint{relint} \opn\st{st}
\opn\lk{lk} \opn\cn{cn} \opn\core{core} \opn\vol{vol}
\opn\link{link} \opn\star{star}\opn\lex{lex}\opn\set{set}
\opn\dist{dist}
\opn\gr{gr}
\opn\dis{dis}
\def\pnt{{\raise0.5mm\hbox{\large\bf.}}}
\opn\Lex{Lex}
\opn\syz{{\rm syz}}
\opn\spoly{{\rm spoly}}
\opn\LM{{\rm LM}}
\opn\lm{{\rm lm}}
\opn\type{{\rm type}}
\opn\lcm{{\rm lcm}} \opn\A{\mathcal A}
\def\pot#1#2{#1[\kern-0.28ex[#2]\kern-0.28ex]}
\opn\dirlim{\underrightarrow{\lim}}
\opn\inivlim{\underleftarrow{\lim}}
\tikzstyle{Cgray}=[scale = .45,circle, fill = gray, minimum size=3mm] \tikzstyle{Cblack}=[scale = .7,circle, fill = black, minimum size=3mm]
\tikzstyle{Cblue}=[scale = .5,circle, fill = blue, inner sep = 0pt, minimum
\tikzstyle{C1}=[scale = .7,circle, fill = black!0, inner sep = 0pt, minimum
\tikzstyle{test2}=[scale = 1.5,circle, fill = black!0, inner sep = 0pt, minimum
\tikzstyle{Cwhite}=[scale = .45,circle, fill = white, minimum size=3mm] 
\tikzstyle{Cblack2}=[scale = .3,circle, fill = black, minimum size=3mm] 
\tikzstyle{Cblack}=[scale = .3,circle, fill = black, minimum size=3mm]
\tikzstyle{C0}=[scale = .9,circle, fill = black!0, inner sep = 0pt, minimum size=3mm]
\tikzstyle{C1}=[scale = .7,circle, fill = black!0, inner sep = 0pt, minimum size=3mm]
\tikzstyle{Cred}=[scale = .4,circle, fill = red, minimum size=3mm] 
\tikzstyle{Cblack3}=[scale = .4,circle, fill = black, minimum size=3mm]
\title{Minimal cellular resolutions of powers of matching field ideals}
\author{Oliver Clarke and Fatemeh Mohammadi}
\begin{document}
\maketitle

\abstract{We study a family of monomial ideals, called block diagonal matching field ideals, which arise as monomial Gröbner degenerations of determinantal ideals. Our focus is on the minimal free resolutions of these ideals and all of their powers. Initially, we establish their linear quotient property and compute their Betti numbers, illustrating that their minimal free resolution is supported on a regular CW complex. Our proof relies on the results of Herzog and Takayama, demonstrating that ideals with a linear quotient property have a minimal free resolution, and on the construction by Dochtermann and Mohammadi of cellular realizations of these resolutions. We begin by proving the linear quotient property for each power of such an ideal. Subsequently, we show that their corresponding decomposition map is regular, resulting in a minimal cellular resolution. Finally, we demonstrate that distinct decomposition maps lead to different cellular complexes with the same face numbers.

{\hypersetup{linkcolor=black}
\setcounter{tocdepth}{1}
\setlength\cftbeforesecskip{1pt}
{\tableofcontents}}

\section{Introduction}
Consider \( R = \mathbb{K}[x_1, x_2, \ldots, x_n] \), the graded polynomial ring over a field \( \mathbb{K} \), and \( I \subset R \) an ideal generated by monomials \( I = \langle m_1, m_2, \ldots, m_k \rangle \). Investigating minimal graded resolutions of such monomial ideals is a common endeavor in combinatorial commutative algebra. In their work \cite{HT}, Herzog and Takayama introduced a general technique called the ``mapping cone" for constructing a free resolution of the monomial ideal \( I \) under the condition that the ideal exhibits a linear quotient property. The core concept involves the gradual inclusion of generators \( m_i \) one by one, systematically constructing the resolution of the ideal \( \langle m_1, \ldots, m_i \rangle \) as a mapping cone derived from a suitable map between previously constructed complexes for the ideal \( \langle m_1, \ldots, m_{i-1} \rangle \). Various families of ideals stemming from graphs or matroids have been shown to possess the linear quotient property \cite{mohammadi2010weakly, mohammadi2011powers}.

\medskip

In \cite{batzies2002discrete}, Batzies and Welker utilized Discrete Morse theory to construct a cellular resolutions  of ideals featuring linear quotients (or the shellable property). Later, in \cite{dochtermann2012cellular}, Dochtermann and Mohammadi provided an explicit construction of such complexes. More precisely, they showed that when certain conditions are met, characterized by the presence of a regular decomposition function, the minimal free resolution of any ideal with a linear quotient property has a cellular structure. That is, there exists a CW complex whose vertices symbolize the generators of 
$I$ and higher-dimensional faces representing the higher-dimensional syzygies in the resolution. This idea is introduced in the seminal paper  \cite{BS98} by Bayer and Sturmfels. While not all minimal free resolutions of monomial ideals are supported on a CW-complex \cite{velasco2008minimal}, an interesting question emerges: which algebraic minimal free resolutions can indeed be realized cellularly, accompanied by a geometric or combinatorial characterization?
It has been established that several well-studied classes of monomial ideals have cellular resolutions. For example, Novik, Postnikov, and Sturmfels, in \cite{novik2002syzygies}, constructed minimal cellular resolutions for monomial ideals associated to (orinted) matroids. Furthermore, Mermin \cite{mermin2010eliaiiou} showed that the Eliahou-Kervaire resolution of a stable ideal 
is supported on a CW complex. Sinefakopoulos \cite{sinefakopoulos2008borel} established this property for Borel fixed ideals, while Iglesias and Sáenz-de-Cabezón recently extended this result to quasi-stable ideals \cite{iglesias2022cellular}. Engström and Dochtermann \cite{dochtermann2012cellular} showed this characteristic for family of co-interval ideals. In particular, the ideals arising from graphs, simplicial complexes and posets, and their cellular resolutions have been extensively studied; see e.g.~\cite{mohammadi2014powers,mohammadi2014divisors,mohammadi2016divisors,mohammadi2016reliability, ene2011monomial, mohammadi2018prime,faridi2024cellular}. 
\medskip

Once an ideal possesses a cellular resolution, a natural question arises: can this property be extended to its powers? 
{Investigation into this problem began with Bertram, Ein, and Lazarsfeld \cite{bertram1991vanishing}. However, exploring this avenue is a highly challenging task and has only been undertaken for very specific ideals. For instance, the question has been explored for powers of maximal ideals \cite{almousa2022polarizations, mermin2010eliaiiou}, powers of path ideals \cite{engstrom2012cellular}, and powers of square-free
monomial ideals of projective dimension one~\cite{cooper2022powers}.} There are even multiple examples where the power of an ideal with a linear resolution fails to have a linear resolution; see e.g.~\cite{conca2005regularity} and the references therein.

\medskip
Here, we study this problem for the family of matching field ideals. 
More precisely, we study a specific family of monomial degenerations obtained as matching field ideals, as defined by Sturmfels and Zelevinsky \cite{sturmfels1993maximal}, which can also be obtained as weight degenerations \cite{Fatemeh2024}.
We establish that all members of this family, and all their powers,  exhibit the linear quotient property, following the definition by Herzog and Takayama \cite{HT}, and satisfy additional conditions characterized by Dochtermann and Mohammadi~\cite{FatemehAnton}. Consequently, these ideals have cellular resolutions.

\medskip
Consider a generic \(3 \times n\) matrix of indeterminants \(X = (x_{ij})\) and the ideal generated by maximal minors of \(X\). The minimal free resolutions of this ideal \cite{bruns2006determinantal, bruns2022determinants}, as well as its powers \cite{bruns2015maximal}, have been extensively studied in commutative algebra. In particular, it has been shown that they possess linear resolutions. Here, we investigate certain monomial degenerations of these ideals, also known as initial ideals, and strengthen these results by demonstrating that all these ideals and their powers have linear resolutions, and more importantly, their resolutions are cellular. 
More precisely, we establish the linear quotient property for all powers of the block diagonal matching field ideals. Furthermore, we demonstrate that the minimal free resolution of each such ideal is supported on a CW complex. While this property has been previously established for the matching field ideals themselves~\cite{Fatemeh2024}, our work extends it to their powers.
\begin{example}
    Fix $n \ge 3$, and $X = (x_1, \dots, x_n; y_1, \dots, y_n; z_1, \dots, z_n)$ a $3\times n$ matrix of indeterminates. The first examples of matching field ideals are the diagonal matching field ideals, which are generated by the monomials $x_i y_j z_k$ for $1 \leq i < j < k\leq n$. The homological invariants of these ideal arising from the diagonal case are known; see~Corollary~\ref{cor:cm}. Moreover, these ideals are all square-free and share the same Betti numbers as long as $n$ is fixed; however, they have different resolutions and distinct cellular resolutions. In fact, their minimal free resolutions are supported on different CW complexes, with the associated Betti numbers remaining invariant for a fixed $n$; see~\cite{Fatemeh2024}.
    For \( n = 4 \), we have three different matching field ideals, as follows:
\begin{itemize}
    \item \( M_{(4)} = \langle x_1 y_2 z_3, x_1 y_2 z_4, x_1 y_3 z_4, x_2 y_3 z_4 \rangle \)
\item \( M_{(2,2)} = \langle x_1 y_2 z_3, x_1 y_2 z_4, x_3 y_1 z_4, x_3 y_2 z_4 \rangle \)
\item \( M_{(1,3)} = \langle x_2 y_1 z_3, x_2 y_1 z_4, x_3 y_1 z_4, x_2 y_3 z_4 \rangle \)
\end{itemize}
The Betti numbers of all these ideals are identical, with \(\beta_0 = 4\) and \(\beta_1 = 3\). % and their projective dimension is $2$.
\end{example}
\noindent{\bf Outline.}~In Section~\ref{sec:pre}, we revisit the concepts of linear quotients and matching field ideals. Section~\ref{sec:linear} is devoted to proving the linear quotient property for all powers of the block diagonal matching field ideals. Subsequently, in Section~\ref{sec:cellular}, we demonstrate that the minimal free resolution of each such ideal is supported on a regular CW complex. Finally, we provide illustrative examples of cellular resolutions by depicting their associated CW complexes.

\medskip\noindent{\bf Acknowledgement.} We would like to thank Lara Bossinger, Anna Brosowsky, Janet Page, and Alexandra Seceleanu for their comments on a preliminary version of the paper and pointing out the example in Remark~\ref{rmk: no canonical rep tableau}.
F.M. is partially supported by the FWO grants G0F5921N (Odysseus programme) and G023721N, the UiT Aurora project MASCOT and KU Leuven grant iBOF/23/064.

\section{Preliminaries}\label{sec:pre}

\subsection{Ideals with linear quotient property}\label{Mapcones}

We begin by recalling the ideals with linear quotients property from \cite{HT}.  Fix a field \( \mathbb{K} \) and consider the polynomial \( R = \mathbb{K}[x_1, \ldots , x_n] \) over $\mathbb{K}$ in \( n \) variables.
\begin{definition}
Consider a monomial ideal $I\subset R$ with the minimal monomial generating set $G(I)=\{m_1,\ldots,m_k\}$. The ideal $I$ is said to have {\it linear quotients} if there exists an ordering of its generators $m_1, \dots, m_k$ such that for each $j$ the colon ideal $\langle m_1,\ldots,m_{j-1}\rangle : m_j$ is generated by a subset of the variables $x_1,\ldots,x_n$. We denote this subset of variables by set$(m_j)$.
\end{definition}

It is shown in \cite{HT} that for such ideals the mapping cone construction produces a minimal free resolution, and the Betti numbers of $I$ can be explicitly described as 
$
\beta_\ell(I)=\sum_{j=1}^k{\binom{|{\rm set}(m_j)|}{\ell-1}}.
$ In particular, Herzog and Takayama provided an explicit description of the \emph{differentials} in resolutions for a certain class of these ideals. More precisely, let $M(I)$ denote the set of all monomials in $I$, and $G(I)$ the minimal generating set of $I$. We define the \emph{decomposition function of $I$} to be the assignment $b: M(I)\rightarrow G(I)$ such that $b(m) = m_j$, where $j$ is the smallest number such that $m\in \langle m_1,\ldots,m_{j}\rangle$.  
We consider the following family of decomposition functions.
\begin{definition} \label{def:regular}
A decomposition function $b$ of $I$ is called \emph{regular}, if for each $m\in G(I)$ and every
$t\in \set(m)$ we have
\[
 \quad\set(b(x_t m))\subseteq \set(m).
\]
In this case, for each $m\in G(I)$ and $t,s\in \set(m)$
 we have the {\em exchange property}:
\[
(*)\quad \quad b(x_s b(x_t m))=b(x_t b(x_s m)).
\]
Additionally, if $x_t \notin \text{set}(b(x_s m))$, then $b(x_t b(x_s m))$ is defined to be $b(x_s m)$. In fact, for every element $m$ and each variable $x_t \notin \text{set}(m)$, we have $b(x_t m) = m$.
\end{definition}

\begin{remark}
The term {\it regular decomposition function} originates from the fact that the corresponding minimal free resolution of the ideal \(I = \langle m_1, \ldots, m_r\rangle \) can be constructed by gluing the Koszul complex associated with the colon ideal \(\langle m_1, \ldots, m_{r-1}\rangle : m_r \) to the resolution of the previous ideal \( \langle m_1, \ldots, m_{r-1}\rangle \) in an iterative fashion. This process is reminiscent of the concept of a regular CW complex, which is defined as a CW complex in which the gluing maps are homeomorphisms.
\end{remark}

In \cite{FatemehAnton}, Dochtermann and Mohammadi constructed a minimal cellular resolution of ideals with linear quotients property and regular decomposition map, as follows. % We now recall Theorem~3.11 from \cite{FatemehAnton}.
\begin{theorem}%(\cite[Theorem~3.11]{FatemehAnton})
\label{Theorem_cone}
Suppose $I$ has linear quotients with respect to some ordering $m_1, \dots, m_k$ of the generators, and suppose that $I$ has a regular decomposition function.  Then the minimal resolution of $I$ obtained as an iterated mapping cone is cellular and supported on a regular $CW$-complex.
\end{theorem}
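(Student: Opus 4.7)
The plan is to construct an explicit CW-complex $X$ whose face poset mirrors the iterated mapping cone construction of Herzog and Takayama, and then verify by induction that its cellular chain complex (with monomial labels) realizes their minimal resolution. Concretely, for each generator $m_j$ and each subset $T \subseteq \set(m_j)$ I take a cell $c(m_j, T)$ of dimension $|T|$ labeled by the monomial $m_j \prod_{t \in T} x_t$, so the number of cells in dimension $\ell - 1$ is $\sum_j \binom{|\set(m_j)|}{\ell-1} = \beta_\ell(R/I)$. The prescribed codimension-one faces of $c(m_j, T)$ come in two flavors: the ``same-generator'' faces $c(m_j, T \setminus \{t\})$ and the ``shifted'' faces $c(b(x_t m_j), T \setminus \{t\})$ for each $t \in T$, where the containment $T \setminus \{t\} \subseteq \set(b(x_t m_j))$ is guaranteed by the regularity of $b$.

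Next I would induct on $j$. Start with $X_1$ a single vertex. Assuming $X_{j-1}$ is a regular CW-complex that supports the iterated mapping cone resolution of $\langle m_1, \dots, m_{j-1} \rangle$, form $X_j$ by attaching, for each nonempty $T \subseteq \set(m_j)$, the cell $c(m_j, T)$ along its prescribed boundary inside $X_{j-1} \cup \{c(m_j, \emptyset)\}$. Algebraically, the mapping cone at step $j$ adds a Koszul-like contribution on $\set(m_j)$ twisted by $b$; cellularly this is precisely the attaching data above. Here the exchange property $(*)$ is the crucial ingredient: it guarantees that iterating shifts $x_{t_1}, x_{t_2}, \dots$ for $t_i \in T$ in different orders lands at the same lower-dimensional cell, so the attaching map on the boundary sphere of $c(m_j, T)$ is well defined and its induced boundary matches the Herzog-Takayama differential up to sign.

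The final and most delicate step is to verify that $X$ is \emph{regular}, meaning that the attaching map of every cell is a homeomorphism of a closed ball onto its image. The combinatorial model for $\overline{c(m_j, T)}$ is a polytope whose face poset is generated by the two kinds of faces described above, and the exchange property forces this poset to behave like that of a simplex-shaped ball. The main obstacle I foresee is ruling out unwanted identifications on the boundary: a priori, two different chains of shifts could collide and collapse a face, destroying the ball structure. Proving injectivity of the attaching map on the boundary reduces to a combinatorial statement about how $b$ propagates through $\set(m_j)$, and here $\set(b(x_t m)) \subseteq \set(m)$ together with $(*)$ should prevent any such collapse. Once regularity is confirmed, the cellular chain complex computes the correct multigraded Betti numbers in every multidegree, and so $X$ supports the minimal free resolution of $I$ as claimed.
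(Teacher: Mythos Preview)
This theorem is not proved in the present paper: it is quoted verbatim as a result of Dochtermann and Mohammadi \cite{FatemehAnton} and used as a black box throughout Section~\ref{sec:cellular}. There is therefore no ``paper's own proof'' to compare your proposal against. The only commentary the paper offers is the remark following Corollary~\ref{cor: cell res} that, in the proof of Theorem~\ref{Theorem_cone}, the exchange property $(*)$ alone suffices and the containment $\set(b(x_t m)) \subseteq \set(m)$ is not actually required; your sketch, by contrast, explicitly invokes that containment to justify that $T \setminus \{t\} \subseteq \set(b(x_t m_j))$.

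As to the sketch itself: your outline matches the broad strategy of the Dochtermann--Mohammadi construction (cells indexed by pairs $(m_j, T)$, inductive attachment governed by $b$, the exchange property forcing coherence of boundaries). But it stops short of a proof precisely where the content lies. Saying the face poset of $\overline{c(m_j,T)}$ ``behaves like that of a simplex-shaped ball'' and that $(*)$ ``should prevent any such collapse'' is a statement of hope, not an argument. Establishing regularity requires showing that the attaching map restricted to the boundary sphere is an embedding, which in turn needs a concrete inductive statement guaranteeing that distinct chains of shifts from $(m_j, T)$ land in distinct cells of $X_{j-1}$; the exchange property gives commutativity of shifts but not, by itself, injectivity. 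Until that step is filled in, the proposal is a plausible plan rather than a proof.
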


\begin{remark}
We note that in the proofs of Theorem \ref{Theorem_cone} and the related lemmas, the only property of the decomposition function that is required is the exchange property ($*$), and not its definition in terms of assigning a particular generator to a monomial or that $\set(b(x_tm))\subseteq \set(m)$. 
\end{remark}

\subsection{Block diagonal matching field ideals $M_\a$}
Fix $n \ge 3$, and $X = (x_1, \dots, x_n; y_1, \dots, y_n; z_1, \dots, z_n)$ a $3\times n$ matrix of indeterminates. Let $F \subseteq \CC[X]$ be the set of maximal minors of $X$. A $(3,n)$ matching field is a combinatorial object that picks out a single term of $f$, for each $f \in F$. Concretely, a matching field is uniquely determined by specifying a monomial $x_i y_j z_k$ for each $3$-subset $\{i,\,j,\,k\} \subseteq [n]$. For more background, we refer to reader to \cite{sturmfels1993maximal,bernstein1993combinatorics}. The \textit{monomial matching field ideal}, or simply the \textit{matching field ideal}, is the monomial ideal generated by the terms distinguished by the matching field. For example, the \textit{diagonal matching field}, respectively the \textit{anti-diagonal matching field}, selects the leading-diagonal term, respectively anti-diagonal term, of each maximal minor. These (anti)-diagonal terms generate the initial ideal of $\langle F \rangle$ with respect to any term order such that the leading term of each $f \in F$ is its (anti)-diagonal term \cite{CCA}. We now recall the definition of a block diagonal matching field ideal from \cite{KristinFatemeh}, which generalises the diagonal matching field.

\medskip

\noindent
Fix a sequence of positive integers $ {\bf a} = (a_1, a_2, \dots,  a_{r})$ so that $\sum_{i = 1}^r a_i = n$. We say that $\a$ is a partition of $[n]$. For $1 \leq s\leq r$, let 
$B_s = \{\alpha_{s-1}+1, \alpha_{s-1} +2, \dots, \alpha_{s}\}$,
where $\alpha_{s} = \sum_{i = 1}^{s} a_{i }$ and $\alpha_0 = 0$. 
We write $B_\a = (B_1 | \cdots |B_r)$ for the ordered partition of $[n]$ into sets $B_i$. We refer to the sets $B_i$ as the blocks of the partition $\a$.

\begin{definition}\label{def:blockdiagcolumns}
% Consider the partition ${\bf a} = (a_1, \dots, a_{r})$ of $[n]$, or equivalently the sequence of intervals 
Fix a partition $\a$ of $[n]$ with blocks
$B_\a=(B_1|\cdots|B_r)$, as above. For every $3$-subset $\{i<j<k\}$ of $[n]$, let $s \in [r]$ be the smallest value such that $B_s \cap \{i,j,k\} \neq \emptyset$. Then we associate to $\{i,j,k\}$, the monomial
\[
m_{\{ijk\}} =
\left\{
	\begin{array}{ll}
		x_jy_iz_k & \mbox{if } |\{i,j,k\}\cap B_s|=1 \\
		x_iy_jz_k & \mbox{otherwise}.
	\end{array}
\right.
\]
The block diagonal matching field ideal $M_\a$ associated to the partition $\a$ is the ideal generated by the monomials $m_{\{i,j,k\}}$ associated to all 3-subsets $\{i,j,k\}$ of [n].
\end{definition}

Observe that, for each block $B_i$ of $\a$, the restriction of the generators of $M_\a$ to those involving only variables $x_j, y_j, z_j$, with $j \in B_i$, yields a diagonal matching field. Hence the name: \textit{block-diagonal matching field ideal}. For instance, if $\a = (n)$, then $m_{i<j<k} = x_i y_j z_k$ is the leading diagonal term of each $3$-minor of $X$.

Following conventions of \cite[Section 2]{KristinFatemeh}, a block diagonal matching field ideal $M_\a$ induces an ordering on each $3$-subset of $[n]$ as follows. If $A$ is a $3$-subset and $m_A = x_i y_j z_k$, then the ordering on the elements of $A$ according to the matching field is $(i, j, k)$.

\begin{example}
    The block diagonal matching field $B_{2,3}$ has blocks $B_1 = 12$ and $B_2 = 345$. The matching field ideal $M_{2,3} \subseteq \mathbb K[x_1, \dots, x_5, y_1, \dots, y_5, z_1, \dots, z_5]$ is generated by the monomials:
    \begin{multline*}
        m_{123} = x_1y_2z_3,\ 
        m_{124} = x_1y_2z_4,\ 
        m_{125} = x_1y_2z_5,\ 
        m_{134} = x_3y_1z_4,\ 
        m_{135} = x_3y_1z_5,\\
        m_{145} = x_4y_1z_5,\ 
        m_{234} = x_3y_2z_4,\ 
        m_{235} = x_3y_2z_5,\ 
        m_{245} = x_4y_2z_5,\ 
        m_{345} = x_3y_4z_5.
    \end{multline*}
    So the ordering of the elements of the sets $123$, $134$, $345$ according to the matching field is given by $(1,2,3)$, $(3,1,4)$, $(3,4,5)$.
\end{example}
\color{black}

%Consider the determinantal ideal $I_n$ generated by all maximal minors of the $3\times n$ matrix whose rows are indeterminants $x_i$'s ,$y_i$'s and $z_i$'s. 
Consider the diagonal matching field $B_{(n)}$ and its associated ideal $M_{(n)}=\langle x_iy_jz_k:\ 1\leq i<j<k\leq n\rangle$ in the ring $R=\mathbb{C}[x_i,y_i,z_i:\ i=1,\ldots,n]$. The homological invariants of the ideal $M_{(n)}$ has been extensively studied \cite{herzog1992grobner, bruns1992computation, sturmfels1990grobner}. The following corollary is a special case of \cite[Corollary~1.3]{ene2013determinantal}. For further reference, see also \cite[Theorem 6.35]{ene2011grobner}, \cite[Corollary 3.3.5]{Herzog2011}, \cite[Theorem 6.9]{bruns1992computation}, and \cite[Theorem 3.5]{herzog1992grobner}.
\begin{corollary}%[Corollary~1.3 in \cite{ene2013determinantal}]
\label{cor:cm}
Consider the diagonal matching field $B_{(n)}$ and its associated ideal $M_{(n)}=\langle x_iy_jz_k:\ 1\leq i<j<k\leq n\rangle$ in the ring $R=\mathbb{C}[x_i,y_i,z_i:\ i=1,\ldots,n]$. Then:
\begin{itemize}
	\item[{\rm (a)}]
	$\height M_{(n)}=\projdim{M_{(n)}}=n-2$.
	\item [{\rm (b)}] $M_{(n)}$ is Cohen-Macaulay.
	\item [{\rm (c)}] The Hilbert series of $R/M_{(n)}$ has the form
	$H_{R/M_{(n)}}(t)=\frac{Q(t)}{(1-t)^{2n-2}}$,
	where
	\[
	Q(t)=\left[\det\left(\sum_k{\binom{3-i}{k}}{\binom{n-j}{k}}\right)_{1\leq i,j\leq 2}\right]/ t.
	\]	
	\item [{\rm (d)}] The multiplicity of $R/M_{(n)}$ is
	$e(R/M_{(n)})={\binom{n}{2}}.$
\end{itemize}
\end{corollary}
The homological invariants of diagonal matching field ideals, as discussed above, are well-studied. However, the invariants of their powers remain largely unexplored. In particular, the main homological invariant we will focus on in this note is the Betti numbers and the minimal free resolution of these ideals and their powers.

\section{Powers of matching field ideals have linear quotients}\label{sec:linear}
Fix a partition $\a=(a_1,\ldots,a_r)$ of $[n]$ and a positive integer $\ell$. Let $M_\a$ be the corresponding block diagonal matching field ideal as in Definition~\ref{def:blockdiagcolumns}. In this section, we introduce an ordering on the set of tableaux, whose columns correspond to generators of $M_\a$. We will subsequently use this ordering to give an ordering of the generators of $(M_\a)^\ell$ to prove that the ideal has the linear quotient property.

Let $(B_1 | B_2 | \dots | B_r)$ denote the intervals of the partition $\a$ of $[n]$. For a given $3$-subset $A$ of $[n]$, if the order of the elements in $A$ according to the matching field is $(i, j, k)$, we define the type of $A$ as $\text{type}(A) = (I, J, K)$, where $I, J$, and $K$ are integers such that $i \in B_I$, $j \in B_J$, and $k \in B_K$. In general, lowercase letters will refer to entries of the tableau, while their uppercase counterparts will denote their respective types. Note that, by the definition of the matching field, we have that either $i < j < k$ or $j < i < k$. The latter case happens if and only if $J < I$.

\subsection{Representations of the elements of $(M_\a)^\ell$}\label{subsec:canonical}
Fix a $3$-subset $A = \{i,j,k\}$ whose corresponding ordered set is $(i, j, k)$ in the matching field with $\text{type}(A) = (I, J, K)$. If $i < j < k$ then we have $I = J$, otherwise if $j < i < k$ then we have $J < I$. So we always have that $J \le I$. 
We associate to $A$ a column vector, denoted by $P_A$, with entries $x_i$ in the first row, $y_j$ in the second row, and $z_k$ in the third row. In other words, the product of the entries is the monomial $m_A$ in $M_\a$. A \textit{tableau} $T$ is a concatenation of columns $T = P_{A_1} P_{A_2} \dots P_{A_\ell}$. The monomial corresponding to $T$ is the product of all variables in every column. So $T$ is a $3 \times \ell$ tableau whose entries are the variables such that product of entries in column $i$ is the monomial $m_{A_i}$. Note that there may exist a different tableau $T'$ with the same monomial as $T$. In this case, the tableaux $T$ and $T'$ have the same row-wise contents, and we say that $T'$ is a representation of $T$.

\begin{lemma}[Simplification of tableaux]\label{lem:rep}
Let $T = P_{A_1},\ldots,P_{A_\ell}$ be the tableau corresponding to the $3$-subsets $A_1, A_2, \ldots, A_\ell$ ordered by the matching field. 
Let ${\rm type}(A_u)=(I_u,J_u,K_u)$ and $k_u = \max(A_u)$ for each $u \in [\ell]$.
% , i.e. the index of the interval containing the first entry of the column $i$ is $I'_i$, the index of the second entry of $A_i$ is $J'_i$ and so on. 
Then there is a representation of $T$ as $T'=P_{A'_1},P_{A'_2},\ldots,P_{A'_\ell}$ such that:
\begin{itemize}
    \item $I'_1\leq I'_2\leq\cdots\leq I'_\ell$,
    % \item $J'_1\leq J'_2\leq\cdots\leq J'_\ell$,
    \item $k'_1\leq k'_2\leq\cdots\leq k'_\ell$,
\end{itemize}
where $\type(A'_u) = (I'_u, J'_u, K'_i)$ and $k'_u = \max(A'_u)$ for each $u$. 
\end{lemma}

\begin{proof}[{\bf Proof of Lemma~\ref{lem:rep}}]
Without loss of generality we may assume that $k_1 \le k_2 \le \dots \le k_\ell$ by permuting the order of the columns of $T$. Let $s$ be the largest value such that,
\[
I_1 \le I_2 \le \dots \le I_{s}, 
\]
and $I_1, \dots, I_{s}$ are the $s$ smallest values in the multiset $\{I_1, \dots, I_\ell \}$. Explicitly if $I_1, \dots, I_\ell$ are ordered by $I_{i_1} \le \dots \le I_{i_\ell}$, then we have $I_1 = I_{i_1}, I_2 = I_{i_2}, \dots, I_s = I_{i_s}$. We show that it is possible to reorder the entries of $T$ such that $I_1 \le I_2 \le \dots \le I_\ell$ and $k_1 \le k_2 \le \dots \le k_\ell$.

We proceed by reverse induction on $s$. If $s = n$, then the result holds. Assume that $s < n$ and let, $I_t = \min \{I_{s + 2}, \dots, I_\ell \} \text{ such that } I_t < I_{s+1}$.
So we have, $I_1 \le I_2 \le \dots \le I_{s} \le I_t < I_{s + 1}$. Consider columns $t$ and $s + 1$ of $T$,
\[
\begin{tabular}{|c|c|}
    \multicolumn{1}{c}{$t$} & \multicolumn{1}{c}{$s + 1$} \\
    \hline
    $i_t$ & $i_{s + 1}$ \\
    \hline
    $j_t$ & $i_{s + 1}$ \\
    \hline
    $k_t$ & $k_{s + 1}$ \\
    \hline
\end{tabular}
\text{ and their types }
\begin{tabular}{|c|c|}
    \multicolumn{1}{c}{$t$} & \multicolumn{1}{c}{$s + 1$} \\
    \hline
    $I_t$ & $I_{s + 1}$ \\
    \hline
    $J_t$ & $J_{s + 1}$ \\
    \hline
    $K_t$ & $K_{s + 1}$ \\
    \hline
\end{tabular}\, .
\]
We have that the following inequalities:
\[
J_t \le I_t \le K_t,\ \ J_{s + 1}\le I_{s + 1}\le K_{s + 1},\ \ I_t < I_{s + 1},\ \ k_{s + 1} \le k_{t}.
\]    
To complete the proof, we show that we can replace columns $t$ and $s + 1$ of $T$ with the columns $t'$ and $(s + 1)'$ below, respectively
\[
\begin{tabular}{|c|c|}
    \multicolumn{1}{c}{$(s + 1)'$} & \multicolumn{1}{c}{$t'$} \\
    \hline
    $i_t$ & $i_{s + 1}$ \\
    \hline
    $j_t$ & $j_{s + 1}$ \\
    \hline
    $k_{s + 1}$ & $k_{t}$ \\
    \hline
\end{tabular} \, .
\]
To do this, it is enough to show that columns $(s + 1)'$ and $t'$ are valid, i.e., their ordering respects the matching field. First, the column $(s + 1)'$ is valid because $J_t \le I_t$ and $I_t < I_{s + 1} \le K_{s + 1}$. Second, the column $t'$ is valid as $J_{s + 1} \le I_{s + 1}$ and $k_{s + 1} \le k_t$, which gives $I_{s + 1} \le K_{s + 1} \le K_{t}$.
% \noindent
% So, by the claim, we may assume that $I_1 \le \dots \le I_\ell$ and $k_1 \le \dots \le k_\ell$. Let us assume that there are two columns $t$ and $s$ of the form
% \[
% \begin{tabular}{|c|c|}
%     \multicolumn{1}{c}{$t$} & \multicolumn{1}{c}{$s$} \\
%     \hline
%     $I_t$ & $I_s$ \\
%     \hline
%     $J_t$ & $J_s$ \\
%     \hline
%     $K_t$ & $K_s$ \\
%     \hline
% \end{tabular} \, ,
% \]
% such that $t < s$ and $J_t > J_s$. Note that by assumption we have $I_t \le I_s$ and $K_t \le K_s$. Also by assumption we have $J_t \le I_t \le K_t$ and $J_s \le I_s \le K_s$. We show that we can replace the columns $t$ and $s$ by the columns $t'$ and $s'$ respectively,
% \[
% \begin{tabular}{|c|c|}
%     \multicolumn{1}{c}{$t'$} & \multicolumn{1}{c}{$s'$} \\
%     \hline
%     $I_t$ & $I_s$ \\
%     \hline
%     $J_s$ & $J_t$ \\
%     \hline
%     $K_t$ & $K_s$ \\
%     \hline
% \end{tabular} \, .
% \]
% To do this, we must show that $t'$ and $s'$ are valid columns. First, the column $t'$ is valid because $J_s < J_t \le I_t$ and $I_t \le K_t$. Second, the column $s'$ is valid because $J_t \le I_t \le I_s$ and $I_s \le K_s$.
% So we can change the entries of the second row of $T$ so that $J_1 \le \dots \le J_\ell$.
% \color{black}
\end{proof}

\begin{remark}
It is also possible to give a direct algorithmic of Lemma~\ref{lem:rep} as follows.
We first show that Lemma~\ref{lem:rep} holds when $T$ is a tableau with two columns. The result gives us
\[
\text{type}(T) = 
\begin{tabular}{|c|c|}
     \hline
    $I_1$ & $I_2$ \\
    \hline
    $J_1$ & $J_2$ \\
    \hline
    $K_1$ & $K_2$ \\
    \hline
\end{tabular} \, , \quad
\text{type}(T') = 
\begin{tabular}{|c|c|}
     \hline
    $\min\{I_1, I_2\}$ & $\max\{I_1, I_2\}$ \\
    \hline
    $J_1/J_2$ & $J_1/J_2$ \\
    \hline
    $\min\{K_1, K_2\}$ & $\max\{K_1, K_2\}$ \\
    \hline
\end{tabular}
\]
where there is no condition on the order of the second row.
To see that this operation is valid, observe that $J_1 \le I_1 \le K_1$ and $J_2 \le I_2 \le K_2$. It follows that $\min\{I_1, I_2\} \le \min\{K_1, K_2\}$ and $\max\{I_1, I_2\} \le \max\{K_1, K_2\}$. Therefore $\text{type}(T')$ is a valid tableau of types and $T$ may be represented in this way. The same argument holds if we replace $K_i$ with the `true value' $k_i$ of the tableau in that position.

Now, given a tableau $T$ with columns $P_{A_1}, P_{A_2}, \dots, P_{A_{\ell}}$, we can construct a representation of $T$ in the desired form by applying the two-column construction described above to the following sequence of pairs of columns in $T$:
$
(1,2), (1,3), \dots, (1,\ell), (2,3), \dots, (2,\ell), \dots, (\ell-1, \ell).$
\end{remark}

\begin{remark}\label{rmk: no canonical rep tableau}
    It is not always possible to find a representation $T'$ of $T$ with $J'_1 \le J'_2 \le \dots \le J'_\ell$. For instance, fix the partition $(1 | 234 | 5)$. Consider the tableau
    $T = (2, 4; 3, 1; 4, 5)$ with $\text{type}(T) = (2, 2; 2, 1; 2, 3)$. However, none of the tableaux $T'$ with $\text{type}(T') = (2,2; 1,2; 2,3)$ that are row-wise equal to $T$ are valid:
    \[
    T = \begin{tabular}{|c|c|}
        \hline
        $2$ & $4$ \\
        \hline
        $3$ & $1$ \\
        \hline
        $4$ & $5$ \\
        \hline
    \end{tabular}\,, 
    \text{ but }
    T' = \begin{tabular}{|c|c|}
        \hline
        $2$ & $4$ \\
        \hline
        $1$ & $3$ \\
        \hline
        $4$ & $5$ \\
        \hline
    \end{tabular} 
    \text{ and }
    T' = \begin{tabular}{|c|c|}
        \hline
        $4$ & $2$ \\
        \hline
        $1$ & $3$ \\
        \hline
        $4$ & $5$ \\
        \hline
    \end{tabular} 
    \text{ is not valid.}
    \]
    Explicitly, the columns $(4,3,5)$ and $(4,1,4)$ are not valid in the respective tableaux.
\end{remark}

\subsection{Linear quotient order on the elements of $(M_\a)^\ell$}\label{subsec:Order}
 
Let $A$ and $B$ be a pair of $3 \times \ell$ tableaux. Denote $(i_r,j_r,k_r)$ for the entries of the $r^{\rm th}$ column of $A$ and $(i'_r,j'_r,k'_r)$ for the $r^{\rm th}$ column of $B$. We write $(I_r, J_r, K_r)$ and $(I'_r, J'_r, K'_r)$ for the types of the $r^{\rm th}$ columns of $A$ and $B$ respectively. We define the type of the tableaux as follows:
\begin{itemize}
    \item $\type(A)=(I_1,\ldots,I_\ell;J_1,\ldots,J_\ell;K_1,\ldots,K_\ell)$,
    \item $\type(B)=(I'_1,\ldots,I'_\ell;J'_1,\ldots,J'_\ell;K'_1,\ldots,K'_\ell)$.
\end{itemize}
So, $\type(A)$ is a $3 \times \ell$ tableau whose $ij^{\rm th}$ entry indexes the part of the partition $\a$ that contains the $ij^{\rm th}$ entry of $A$.

We define the following ordering on the set of $3 \times \ell$ tableaux:
\[
A<_{\a}^\ell B\quad\Longleftrightarrow\quad\type(A)<\type(B)\quad\text{or}\quad\type(A)=\type(B)\quad\text{and}\quad A<^1B
\]
where
$\type(A)<\type(B)$ if and only if
\begin{itemize}
    \item There exists $t$ such that $k_\ell=k'_\ell,\ldots,k_{t+1}=k'_{t+1}$ and $k_t>k'_t$, or
    
    \item $(k_1,\ldots,k_\ell)=(k'_1,\ldots,k'_\ell)$ and there exists $t$ such that 
    $J_1=J'_1,\ldots,J_{t-1}=J'_{t-1}$ and $J_t<J'_t$, or
    
    \item $(k_1,\ldots,k_\ell)=(k'_1,\ldots,k'_\ell)$ and $(J_1,\ldots,J_\ell)=(J'_1,\ldots,J_\ell)$ and there exists $t$ such that 
    $I_\ell=I'_\ell,\ldots,I_{t+1}=I'_{t+1}$ and $I_t>I'_t$.
\end{itemize}

\medskip

For two tableaux $A$ and $B$ with $\type(A)=\type(B)$ we write that $A<^1B$ if and only if:
\begin{itemize}
    \item There exists some $t$ such that $i_1=i'_1,\ldots,i_{t-1}=i'_{t-1}$ and $i_t<i'_{t}$, or 
    
    \item $i_1=i'_1,\ldots,i_{\ell}=i'_{\ell}$ and there exists $t$ such that $j_1=j'_1,\ldots,j_{t-1}=j'_{t-1}$ and $j_t<j'_t$.
\end{itemize}
Note that if $A$ and $B$ have same type, then we know that $(k_1,\ldots,k_\ell)=(k'_1,\ldots,k'_\ell)$.

\medskip 
\noindent \textbf{Ordering generators.} We use the ordering of the $3 \times \ell$ tableaux to define an ordering on the generators of $(M_{\a})^\ell$. Let $m_1$ and $m_2$ be generators of $(M_{\a})^\ell$. For each $i \in [2]$, we write $T_i = \{A : m_A = m_i\}$ for the set of representative tableaux for $m_i$. We define
\[
    m_1 <_{\a}^\ell m_2 \iff \min(T_1) <_{\a}^\ell \min(T_2)
\]
where the minimum on the right hand side is taken with respect to $<_\a^{\ell}$: the ordering on the $3 \times \ell$ tableaux. We call the tableau $\min(T_1)$ the \textit{canonical type-representation} of $m_1$. By Lemma~\ref{lem:rep}, it follows that the canonical type-representation satisfies
\[
    \text{type}(\min(T_1)) = 
    \begin{bmatrix}
    I_1 & \le \dots \le & I_\ell \\
    J_1 & \dots & J_\ell \\
    k_1  & \le \dots \le & k_\ell
    \end{bmatrix}.
\]

\begin{example}
For the block diagonal matching field $\a=(2,2,2)$ with the partition sets $B=(12|34|56)$, every tableau of type $(3,3;1,1;3,3)$ is less than every tableau of type $(2,3;1,2;3,3)$ and $(2,3;1,2;2,3)$. Intuitively, we first order tableaux based on their types; all tableaux of the same type would appear consecutively and then we put an ordering on the tableaux of the same type (based on $<^1$). For example, the tableau $(3,5;2,3;6,6)<^1 (4,5;1,3;6,6)$ and they are of the same type $(2,3;1,2;3,3)$. 
\end{example}

\begin{theorem}\label{thm:linearquotient_powers}
The ideal $(M_\a)^\ell$ has a linear quotient property with respect to the ordering $<_\a^\ell$ for every block diagonal matching field $\a$ and every integer $\ell\geq 1$.
\end{theorem}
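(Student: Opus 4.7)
The plan is to verify the linear-quotient criterion directly. For each generator $m_B$ of $(M_\a)^\ell$ indexed by a canonical tableau $B$, I would define an explicit set $\set(B)$ of variables and show that (i) for every $v\in\set(B)$ there exists a tableau $B''<_\a^\ell B$ with $m_{B''}\mid v\cdot m_B$, and (ii) for every $A<_\a^\ell B$, some $v\in\set(B)$ divides $m_A/\gcd(m_A,m_B)$. Together these say that the colon ideal $\langle m_{A'}:A'<_\a^\ell B\rangle:m_B$ is generated by $\set(B)$, which is the linear-quotient condition.

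The first step is to enumerate the atomic decreasing moves on a canonical tableau $B$. Reading the definition of $<_\a^\ell$ as a cascading lexicographic order on $(k_\ell,\dots,k_1;J_1,\dots,J_\ell;-I_\ell,\dots,-I_1;i_1,\dots,i_\ell;j_1,\dots,j_\ell)$, these split into five families: (a) a $k$-move raises some $k_r$ starting from the rightmost column; (b) a $J$-move lowers some $j_r$ to a smaller block starting from the leftmost column, with the $k$-row unchanged; (c) an $I$-move raises some $i_r$ to a larger block starting from the rightmost column, with the $k$- and $J$-rows unchanged; (d) an intra-type move decreases some $i_r$ within its own block starting from the leftmost column; (e) an intra-type move decreases some $j_r$ within its own block. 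In each family the resulting tableau $B''$ differs from $B$ by exchanging a single variable of $m_B$ for a new variable $v$, possibly followed by a reordering of columns guaranteed by Lemma~\ref{lem:rep} to restore canonical form. These new variables assemble into $\set(B)$, and validity of $B''$ is checked against the matching-field column rule $J\le I\le K$ and the monotonicity conditions of Lemma~\ref{lem:rep}.

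For (ii), given $A<_\a^\ell B$, I would locate the topmost layer of the cascade at which $A$ and $B$ first disagree and produce an atomic move of the corresponding family whose new variable $v$ appears in $m_A$. For instance, if the first disagreement is $k_t^A>k_t^B$, then $m_A$ contains a $z$-variable with index strictly larger than $k_t^B$, and the $k$-move on $B$ raising $z_{k_t^B}$ to this variable is available; the introduced $v$ lies in $\set(B)$ and divides $m_A/\gcd(m_A,m_B)$. The $J$-, $I$-, $i$-, and $j$-layers are treated analogously, the last two inside a fixed type.

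I expect the main obstacle to be the case analysis, specifically guaranteeing that each atomic move yields a canonical tableau strictly smaller than $B$ without disturbing a higher layer of the cascade. The delicate point is that modifying a $k$-entry may alter the type $K_r$ and force a simultaneous rearrangement of the $J$- and $I$-rows, and similarly a $J$- or $I$-move may require re-sorting columns to re-establish canonical order; verifying that the canonicalizations provided by Lemma~\ref{lem:rep} do not push the result back above $B$ in $<_\a^\ell$ is the most intricate part. This will require careful treatment of the boundary cases, namely ties among consecutive types, $k$-entries at block boundaries, and the interaction between type-level and intra-type moves.
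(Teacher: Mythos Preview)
Your plan is essentially the paper's proof: a five-case analysis according to which layer of the cascade first distinguishes $A$ from $B$, followed by a single-entry swap producing $C<_\a^\ell B$ with $m_C\mid v\,m_B$ for a variable $v$ drawn from $A$. The one simplification you are missing is that the anticipated ``delicate'' re-canonicalization step never arises: because the swap is made precisely at the \emph{first} point of disagreement (from the appropriate end), the inequalities $k'_{t-1}\le k'_t<k_t\le k_{t+1}=k'_{t+1}$ (and their analogues in the $J$-, $I$-, $i$-, $j$-layers) guarantee that the modified row remains sorted and the column remains valid, so $C$ is already in canonical form and Lemma~\ref{lem:rep} is not invoked. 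Consequently your part~(i) and the explicit description of $\set(B)$ are unnecessary; part~(ii) alone, carried out exactly as you describe, is what the paper does.
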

\begin{proof}
For each pair $(A,B)$ of monomials in $(M_\a)^\ell$, we consider their corresponding tableaux in canonical forms as described in Lemma~\ref{lem:rep}. Assume that $A <_\a^\ell B$. By working through the definition of the ordering $<_\a^\ell$
we construct another tableau $C$. This new tableau $C$ is formed by swapping an entry of $B$ with the first differing element of
$A$, with respect to the ordering $<_\a^\ell$. We show that the resulting tableau $C$ adheres to the constraints of a valid ordering derived from our matching field.
More precisely, let $A$ and $B$ be two monomials in $(M_\a)^\ell$ and write,
\[
A = 
\begin{bmatrix}
i_1 & \dots & i_\ell \\
j_1 & \dots & j_\ell \\
k_1 & \dots & k_\ell
\end{bmatrix} \, , \quad
B =
\begin{bmatrix}
i'_1 & \dots & i'_\ell \\
j'_1 & \dots & j'_\ell \\
k'_1 & \dots & k'_\ell
\end{bmatrix}\, .
\]
The tableaux are written with a canonical type-representation. So,  by Lemma~\ref{lem:rep}, we have:
\[
\begin{bmatrix}
I_1 & \le \dots \le & I_\ell \\
J_1 & \dots & J_\ell \\
k_1 & \le \dots \le &k_\ell
\end{bmatrix}
\text{ and }
\begin{bmatrix}
I'_1 &\le \dots \le & I'_\ell \\
J'_1 & \dots & J'_\ell \\
k'_1 &\le \dots \le & k'_\ell
\end{bmatrix} \, . 
\]

To show that $  (M_\a)^\ell$ has linear quotients, we prove that there exists a variable $v$ in $A$ such that $vB$ is a multiple of a generator $C$ of $  (M_\a)^\ell$ where $C <_\a^\ell B$. We find $v$ by taking cases on the different steps in the definition of $A<_\a^\ell B$.

\smallskip

\textbf{Case 1.} Suppose there exists $t$ such that $k_{\ell} = k'_{\ell}, \dots, k_{t+1} = k'_{t+1}$ and $k_t > k'_t$. Then by the ordering we have that,
\[
\begin{bmatrix}
i'_t \\
j'_t \\
k_t
\end{bmatrix}
<
\begin{bmatrix}
i'_t \\
j'_t \\
k'_t
\end{bmatrix} \, .
\]
So define $v := z_{k_t}$ and let $C'$ be the matrix obtained from $B$ by changing $k'_t$ with $k_t$ in column $t$. Since $B$ has a canonical type-representation and $k_\ell \ge k_{\ell - 1} \ge \dots \ge k_{t+1} \ge k_t > k'_t$. Let $C$ be the canonical type-representation of $C'$. Then, by definition, we have $C \le_\a^\ell C' <_\a^\ell B$.

\smallskip

\textbf{Case 2.} Suppose that $k_1 = k'_1, \dots, k_\ell = k'_\ell$ and there exists $t$ such that $J_1 = J'_1, \dots, J_{t-1} = J'_{t-1}$ and $J_t < J'_t$. Then by the ordering we have that,
\[
\begin{bmatrix}
i'_t \\
j_t \\
k'_t
\end{bmatrix}
<
\begin{bmatrix}
i'_t \\
j'_t \\
k'_t
\end{bmatrix} \, .
\]
Note that the left hand column is valid. We define $v := y_{j_t}$ and let $C'$ be the matrix obtained from $B$ by changing $j'_t$ to $j_t$ in column $t$. By assumption we have that the final rows of $A$ and $B$ are identical and $J_1 = J'_1 \le J_2 = J'_2 \le \dots \le J_t < J'_t$. Let $C$ be the canonical type-representation of $C'$. Then, by definition, we have $C \le_\a^\ell C' <_\a^\ell B$.

\smallskip

\textbf{Case 3.} Suppose that $k_1 = k'_1, \dots, k_\ell = k'_\ell$, $J_1 = J'_1, \dots, J_\ell = J'_\ell$ and there exists $t$ such that $I_\ell = I'_\ell, \dots, I_{t+1} = I'_{t+1}$ and $I_t > I'_t$. So by the ordering we have that,
\[
\begin{bmatrix}
i_t \\
j'_t \\
k'_t
\end{bmatrix}
<
\begin{bmatrix}
i'_t \\
j'_t \\
k'_t
\end{bmatrix} \, .
\]
In particular, note the left hand side is a valid column because
\[
J'_t = J_t \le I_t \le K_t = K'_t \text{ and } k_t = k'_t.
\]
So we have $j'_t < i_t < k'_t$. We define $v := x_{i_t}$ and let $C$ be the matrix obtained from $B$ by replacing $i'_t$ with $i_t$ in column $t$. Since $I_\ell = I'_\ell \ge \dots \ge I_{t+1} = I'_{t+1} \ge I_t > I'_t$ it follows that $C$ has canonical type-representation and $C <_\a^\ell B$.

\smallskip

\textbf{Case 4.} Suppose that $\text{type}(A) = \text{type}(B)$ and there exists $t$ such that $i_1 = i'_1, \dots, i_{t-1} = i'_{t-1}$ and $i_t < i'_t$. We show that,
\[
\begin{bmatrix}
i_t \\
j'_t \\
k'_t
\end{bmatrix}
<
\begin{bmatrix}
i'_t \\
j'_t \\
k'_t
\end{bmatrix} \, 
\]
and in particular, the left hand side is a valid column. %To do this, 
We take cases on the types $I_t$ and $J_t$.
If $I_t = J_t$, then we have $i_t < i'_t < j'_t$, since $I_t = I'_t$ and $J_t = J'_t$. Hence, the left hand side is a valid column.
If $J_t < I_t$, then the left hand column is valid, since by $I_t = I'_t$ and $J_t = J'_t$.
So we define $v := x_{i_t}$ and let $C$ be the matrix obtained from $B$ by changing the entry $i'_t$ to $i_t$ in column $t$. Since $\text{type}(A) = \text{type}(B)$ it follows that $C$ is of the same type, so $C <_\a^\ell B$.

\smallskip

\textbf{Case 5.} Suppose that $\text{type}(A) = \text{type}(B)$, $i_1 = i'_1, \dots, i_\ell = i'_\ell$ and there exists $t$ such that $j_1 = j'_1, \dots, j_{t-1} = j'_{t-1}$ and $j_t < j'_t$. Then column $t$ of $A$ and $B$ respectively have the form,
\[
\begin{bmatrix}
i'_t \\
j_t \\
k'_t
\end{bmatrix}
<
\begin{bmatrix}
i'_t \\
j'_t \\
k'_t
\end{bmatrix} \, .
\]
So define $v := y_{j_t}$ and let $C$ be the matrix obtained from $B$ by changing $j'_t$ to $j_t$ in column $t$. Since $\text{type}(A) = \text{type}(B)$ it follows that $C$ is of the same type, so $C <_\a^\ell B$, as desired.
\end{proof}

\section{Powers of matching field ideals have cellular resolutions}\label{sec:cellular}
%obtained by mapping cone construction}
In this section, we demonstrate that the ideals $(M_\a)^\ell$ satisfy the conditions of Theorem~\ref{Theorem_cone}, thus possessing minimal cellular resolutions. Equivalently, their minimal free resolutions are supported on regular CW complexes.
By Theorem~\ref{thm:linearquotient_powers}, all the ideals $(M_\a)^\ell$ possess the linear quotient property. In this section, we will associate a decomposition map with the linear quotient order defined in the previous section, demonstrating that such a map possesses the desired properties required for Theorem~\ref{thm:linearquotient_powers}.

\begin{remark}
For $\ell = 1$, the decomposition map $b$ is uniquely determined since for any variable $v$ and monomial $m \in G(M)$ there is at most one monomial $m' \in G(M)$ such that $m' | v m$. However if $\ell > 1$ then for some variable $v$ and monomial $m \in G(  (M_\a)^\ell)$ there may exist $m'$ and $m'' \in G(  (M_\a)^\ell)$ such that $m' | v m$ and $m'' | vm$. For example, let ${\bf a} = [123|45]$ and $m = x_2 x_4 y_3 y_3 z_5 z_5 = (2,3,5)(4,3,5) \in G(M_{\bf a}^2)$. Then $x_1 \in \text{set}(m)$ however we have both $x_1 m = x_2(1,3,5)(4,3,5)$ and $x_1 m = x_4(2,3,5)(1,3,5)$. 
\end{remark} 

We choose the following decomposition function $b^\ell$ for $(M_\a)^\ell$.
Let $m \in G(M^\ell_{\bf a})$ then for any $i$ in $[n]$, where $i \in B_s$ for some $s$, we define:
\begin{align*}
    b^\ell(x_i m) &= \left\{
    \begin{tabular}{lcl}
        $m$                     & \quad & if $x_i \not\in \text{set}(m)$,  \\
        $\frac{m x_i}{x_j}$     & \quad & if $x_i \in \text{set}(m)$, where $j = \max\{k : \frac{m x_i}{x_k} \in G(M^\ell_{\bf a}) \}$,
    \end{tabular}
    \right. \\
    b^\ell(z_i m) &= \left\{
    \begin{tabular}{lcl}
        $m$                     & \quad & if $z_i \not\in \text{set}(m)$,  \\
        $\frac{m z_i}{z_j}$     & \quad & if $z_i \in \text{set}(m)$, where $j = \min\{k : z_k | m \}$,
    \end{tabular}
    \right. \\
    b^\ell(y_i m) &= \left\{
    \begin{tabular}{lcll}
        $m$                     & \quad & if $y_i \not\in \text{set}(m)$, & \\
        $\frac{m y_i}{y_j}$     & \quad & if $y_i \in \text{set}(m)$, where & if $\{k : k < i, k \in {B_s},  \frac{m y_i}{y_k} \in G(M^\ell_{\bf a}) \} \neq \emptyset$ then \\
        & & & $j = \min\{k : k < i, k \in {B_s}, \frac{m y_i}{y_k} \in G(M^\ell_{\bf a}) \}$ otherwise, \\
        & & & $j = \max\{k : k > i,  \frac{m y_i}{y_k} \in G(M^\ell_{\bf a}) \}$.
    \end{tabular}
    \right. \\
\end{align*}

With this decomposition function we now prove the following proposition.

\begin{proposition}\label{proposition:regular}
The decomposing map 
$b^\ell$ satisfies the exchange property ($*$) for each $\ell$. 
\end{proposition}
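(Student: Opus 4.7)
The plan is to verify the exchange property $(*)$ by case analysis on the types of the variables $s$ and $t$. Fix $m \in G((M_\a)^\ell)$, viewed as a canonical tableau as in Lemma~\ref{lem:rep}, so that each substitution prescribed by $b^\ell$ corresponds to a modification of a single row of the tableau: $x$-substitutions act on the top row, $y$-substitutions on the middle, and $z$-substitutions on the bottom. By the convention stated in Definition~\ref{def:regular}, if either variable is absent from $\set(m)$ or becomes absent after the first substitution, both sides of $(*)$ reduce to a single substitution; so we may restrict attention to the case where both substitutions genuinely act.

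The first step is to dispatch the mixed-type cases, i.e.\ pairs of the form $(x_s, y_t)$, $(x_s, z_t)$, or $(y_s, z_t)$. Each of the three rules defining $b^\ell$ selects an entry to replace based on indices within a single row, together with the constraint that the resulting column remains a valid generator, i.e.\ respects the matching-field order on its three entries. Since substitutions in one row affect validity of the other rows only through the pointwise content of the single column touched, I would verify that neither substitution alters the set of indices eligible for replacement by the other. This reduces each mixed-type case to a pair of row-updates on disjoint rows, which visibly commute.

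For the same-type cases, I would begin with the $z$-pair, where the rule $j = \min\{k : z_k \mid m\}$ depends only on the multiset of $z$-indices and the validity condition imposed by the top two rows (unchanged throughout); one checks directly that, under the constraint $s, t \in \set(m)$, both compositions excise the same pair of indices from the bottom row and replace them with $\{s, t\}$. The $x$-pair uses the $\max$ rule subject to generator validity; I would argue that the candidate set $\{k : \frac{m x_i}{x_k} \in G((M_\a)^\ell)\}$ is determined by the multiset of top-row indices and the (unchanged) middle and bottom rows, so that the pair of positions chosen by the two compositions coincides. Here Lemma~\ref{lem:rep} provides the column-validity check needed to confirm that the intermediate and final tableaux are still generators.

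The main obstacle is the $y$-pair, because the rule for $y_i$ has two regimes according to whether there exists $k < i$ in the same block $B_s$ with $\frac{m y_i}{y_k} \in G((M_\a)^\ell)$. My plan is to subdivide based on the regime of each variable at the moment it is applied, and to establish \emph{regime-stability}: the first substitution should not push the second variable into the opposite regime. Since block membership is intrinsic to the index and the predicate defining each regime depends only on block-local information preserved by the other $y$-substitution, this stability should hold, and the two compositions then select the same pair of middle-row indices to exchange. The heart of the argument — and the place requiring the most bookkeeping — is verifying regime-stability together with column validity simultaneously across all sub-cases; once both are in place, the equality of the two compositions is essentially automatic from the symmetric form of the rules.
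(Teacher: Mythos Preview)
Your approach is essentially the same as the paper's: a case split on the types of the two variables, handling mixed-row pairs by arguing that one substitution does not disturb the data governing the other, and handling same-row pairs by tracking which pair of indices gets removed. The paper's written proof in fact only treats the $z$--$z$, $z$--mixed, and $x$--$x$ cases explicitly (its Cases~1--3), omitting the $x$--$y$ and $y$--$y$ cases that you plan to address; one small caution is that your ``visibly commute'' for mixed rows is precisely where the paper does real work in its Case~2 (using that a $z$-substitution only increases bottom-row values and hence enlarges the set of valid swaps in the rows above), so the $x$--$y$ case will require the analogous monotonicity check rather than a bare disjoint-rows argument.
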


\begin{proof}
Throughout this proof we use $b$ to denote the map $b^\ell$. Fix a monomial $m \in G(M^\ell_{\bf a})$. We show that for any variables $v, w \in \text{set}(M^\ell_{\bf a})$ that condition $(*)$ holds by taking cases on $v, w$.

\textbf{Case 1.} Let $v = z_i$, $w = z_j$ and assume without loss of generality that $i < j$. We have $b(z_i m) = \frac{m}{z_i'}z_i$ where $i' = \min \{k : z_k | m\}$. There are now two cases,

\textbf{Case 1 (i).} There exists $k$ such that $z_k | b(z_i m)$ and $ k < i$. Let $j'$ be the smallest such $k$ and so $b(z_j b(z_i m)) = \frac{m z_i}{z_i' z_j'}z_j$. On the other hand we have $b(z_j m) = \frac{m}{z_i'}z_j$. Since $j' < i$, therefore $z_i \in \text{set}(b(z_j m))$ and so,
\[b(z_i b(z_j m)) = \frac{m z_j}{z_i' z_j'}z_i = b(z_j b(z_i m)).\]

\textbf{Case 1 (ii).} There does not exist $k$ such that $z_k | b(z_i m)$ and $k < i$. So $i = \min\{k : z_k | b(z_i m) \}$. Therefore $b(z_j b(z_i m)) = \frac{m}{z_i'} z_j$. On the other hand $b(z_j m) = \frac{m}{z_i'}z_j$. By assumption there are no $k$ such that $z_k | b(z_i m)$ and $k < i$. Since $j > i$ it follows that there exists no $k$ such that $z_k | b(z_j m)$ and $k < i$. Hence $z_i \not\in \text{set}(b(z_j m))$. Therefore,
\[
b(z_i b(z_j m)) = b(z_j m) = \frac{m}{z_i'}z_j = b(z_j b(z_i m)).
\]

\textbf{Case 2.} Let $v = z_i$ and $w = y_j$ or $x_j$. Let $w'$ be the variable such that $b(w m) = \frac{m}{w'}w$. So if $w = y_j$ then $w' = y_{j'}$ for some $j'$, and if $w = x_j$ then $w' = x_{j'}$ for some $j'$. Let $i' = \min\{k : z_k | m \}$. Since $z_i \in \text{set}(m)$, we have $i' < i$. So $\{k : z_k \in \text{set}(m) \} < \{k : z_k \in \text{set}(b(z_i m)) \}$. Therefore $\{x_k \in \text{set}(m) \} \subseteq \{x_k \in \text{set}(b(z_i m)) \} $ and $\{y_k \in \text{set}(m) \} \subseteq \{y_k \in \text{set}(b(z_i m)) \} $. If $T$ is a tableau representing $m$ such that swapping $w'$ with $w$ results in a valid tableau, then we can initially exchange $z_{i'}$ with $z_i$ to produce a tableau $T'$ representing the monomial $b(z_i m)$. The subsequent swapping of $w'$ with $w$ remains valid in $T'$ because by swapping $z_{i'}$ with $z_i$, we are augmenting the values in the last row of $T$, thereby preserving the validity of any swap within the first two rows.
Hence $b(w b(z_i m)) = \frac{m z_i}{z_i' w'} w$. On the other hand we have $b(w m) = \frac{m}{w'}w$. Since neither $w$ nor $w'$ are of the form $z_k$ for some $k$, we have $\{z_k : z_k | m\} = \{z_k : z_k | b(w m) \}$. Hence,
\[
b(z_i b(w m)) = \frac{m w}{w' z_{i'}}z_i = b(w b(z_i m)).
\]

\textbf{Case 3.} Let $v = x_i, w = x_j$ and assume without loss of generality that $i < j$. Let $i'$ and $j'$ be the values for which $b(x_i m) = \frac{m}{x_{i'}}x_i$ and $b(x_j m) = \frac{m}{x_{j'}}x_j$. We note that if $i, j \in I_s$ for some $s$ and $j' \in I_t$ where $t > s$, then there exists a tableau $T$ with a column $(j', b, c)$ such that $(j,b,c)$ is a valid column. It follows that $(i,b,c)$ is a valid column, hence $i' \ge j'$
We now take cases on $i, j, i', j'$.

\textbf{Case 3 (i).} Let $i, j, i', j' \in I_s$ for some $s$. First we show that $i' = j'$. If not then without loss of generality assume $i' > j'$. Let $T$ be a tableau corresponding to the monomial $m$. Suppose $(i', b, c)$ is a column in $T$. Since $j < j' < i'$ and $j, j', i' \in I_s$, it follows that $(j, b, c)$ is a valid column. By swapping the columns $(i', b, c)$ with $(j, b, c)$ in $T$, we deduce that $j' < i' \le \max\{k : \frac{m x_j}{x_k} \in G(M^\ell_{\bf a})\}$, contradicting the definition of $b^\ell$. So we have $i' = j'$.

Let $i'' = \max\{k \in I_s : x_k | b(x_j m)\}$ and note that $i'' \ge j$. So there are two cases.

\textbf{Case 3 (i) (a).} Let $i'' = j$. It follows that $b(x_i b(x_j m)) = \frac{m}{x_{j'}}x_i$. Consider $b(x_i m) = \frac{m}{x_{i'}}x_i = \frac{m}{x_{j'}}x_i$. Since $j > i$ therefore $\max\{k \in I_s : x_k | b(x_i m)\} \le j$ and so $x_j \not\in \text{set}(b(x_i m))$. And so,
\[
b(x_j b(x_i m)) = b(x_i m) = \frac{m}{x_{i'}}x_i = b(x_i b(x_jm)).
\]

\textbf{Case 3 (i) (b).} Let $i'' > j$. It follows that $b(x_i b(x_j m)) = \frac{m x_j}{x_{j'} x_{i''}}x_i$. On the other hand consider $b(x_i m) = \frac{m}{x_{i'}}x_i = \frac{m}{x_{j'}}x_i$. Since $i'' > j > i$, it follows that,
\[
b(x_j b(x_i m)) = \frac{m x_i}{x_{j'} x_{i''}}x_j = b(x_i b(x_j m)).
\]

\textbf{Case 3 (ii).} Let $i, j \in I_s$ for some $s$. Consider $b(x_i m)$. We assume that $i' =  j' \in I_t$ for some $t > s$. Likewise, the case where $i' > j'$ follows a similar line of reasoning. Suppose that there exists $j'' > i$ such that $b(x_j b(x_i m)) = \frac{m x_i}{x_{j'}x_{j''}}x_j$. Now consider $b(x_j m)$. Since $j'' > i$, any columns of the form $(j'', b, c)$ for which $(j,b,c)$ is a column, we must have $(i, b, c)$ is also a valid column. By definition of $b$, we take $j''$ as the maximum value for which we can swap $j''$ for $i$. Hence $b(x_i b(x_j m)) = b(x_j b(x_i m))$. 

Suppose now that there does not exist $j'' > i$ as above. Then it follows that $b(x_j b(x_i m)) = \frac{m}{x_{j'}}x_i$. Consider $b(x_j m)$, since $i < j$ and there does not exist $j''$ as above, we must have that $j = \max \{k > i : \frac{b(x_j m)}{x_k}x_i \in G(M^\ell_{\bf a}) \}$. So,
$
b(x_i b(x_j m)) = \frac{m}{x_{j'}}x_i = b(x_j b(x_i m)),
$ as desired.
\end{proof}

As an immediate corollary of Theorem~\ref{thm:linearquotient_powers}, Theorem~\ref{Theorem_cone} and Proposition~\ref{proposition:regular} we have that:
\begin{corollary}\label{cor: cell res}
Let $\a$ be any partition of $[n]$ and $M_\a$ be its corresponding matching field ideal. For every $\ell$, the ideal $(M_\a)^\ell$ has linear quotients. Furthermore, the minimal resolution of $(M_\a)^\ell$ obtained as an iterated mapping cone is cellular and supported on a regular $CW$-complex.
\end{corollary}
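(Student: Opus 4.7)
The plan is to assemble the statement directly from the three results proved earlier in the paper: Theorem~\ref{thm:linearquotient_powers}, Proposition~\ref{proposition:regular}, and Theorem~\ref{Theorem_cone}. The first conclusion, that $(M_\a)^\ell$ has linear quotients, is exactly the content of Theorem~\ref{thm:linearquotient_powers} with respect to the ordering $<_\a^\ell$ constructed in Subsection~\ref{subsec:Order}, so nothing further is required for this half of the statement.

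For the second conclusion, I would invoke Theorem~\ref{Theorem_cone}, which takes as input a monomial ideal with linear quotients together with a regular decomposition function and produces a minimal free resolution that is cellular and supported on a regular CW-complex. The linear quotient hypothesis is supplied by Theorem~\ref{thm:linearquotient_powers}, and the decomposition map defined just before Proposition~\ref{proposition:regular} furnishes the candidate regular decomposition function $b^\ell$. As highlighted in the Remark preceding Proposition~\ref{proposition:regular}, the sole property of the decomposition function actually used in the proof of Theorem~\ref{Theorem_cone} is the exchange property $(*)$, and this is precisely the content of Proposition~\ref{proposition:regular}.

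Combining these three ingredients, one obtains at once that the iterated mapping cone construction of Herzog and Takayama applied to the ordered generating set of $(M_\a)^\ell$ yields a minimal free resolution, and that the Dochtermann--Mohammadi construction turns this algebraic resolution into a cellular one supported on a regular CW-complex. Since all the genuine combinatorial work, namely constructing and verifying the linear quotient order $<_\a^\ell$ and verifying the exchange property for $b^\ell$, has already been carried out in Sections~\ref{sec:linear} and~\ref{sec:cellular}, the proof of the corollary is a two-line citation of the three results in sequence. Accordingly, there is no main obstacle to overcome at this step; the obstacles were all absorbed into Theorem~\ref{thm:linearquotient_powers} and Proposition~\ref{proposition:regular}.
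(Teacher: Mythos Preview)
Your proposal is correct and matches the paper's approach exactly: the corollary is stated in the paper as an immediate consequence of Theorem~\ref{thm:linearquotient_powers}, Theorem~\ref{Theorem_cone}, and Proposition~\ref{proposition:regular}, with no additional argument given. Your reading that all the substantive work has been absorbed into those earlier results, leaving only a citation here, is precisely what the paper does.
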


\begin{example}\label{example: gr24 1}
    Consider the matching field given by the partition $\a = (2,2)$. The generators of the ideal $M_{\a}$ are $
    x_1 y_2 z_3,\, 
    x_1 y_2 z_4,\,
    x_3 y_1 z_4,\,
    x_3 y_2 z_4.$ 
    The second power $M_{\a}^2$ is generated by monomials:
    \begin{align*}
    &x_{3}^{2}y_{2}^{2}z_{4}^{2},\,x_{1}x_{3}y_{2}^{2}z_{4}^{2},\,x_{1}^{2}y_{2}^{2}z_{4}^{2},\,x_{3}^{2}y_{1}y_{2}z_{4}^{2},\,x_{1}x_{3}y_{1}y_{2}z_{4}^{2},\\
    &x_{3}^{2}y_{1}^{2}z_{4}^{2},\,x_{1}x_{3}y_{2}^{2}z_{3}z_{4},\,x_{1}^{2}y_{2}^{2}z_{3}z_{4},\,x_{1}x_{3}y_{1}y_{2}z_{3}z_{4},\,x_{1}^{2}y_{2}^{2}z_{3}^{2}.
    \end{align*}
    The CW-complex of the cellular resolution of $M_{\a}^2$ given in Corollary~\ref{cor: cell res} is shown in Figure~\ref{fig:gr24 cell complex example}.

    \begin{figure}
        \centering
        \includegraphics{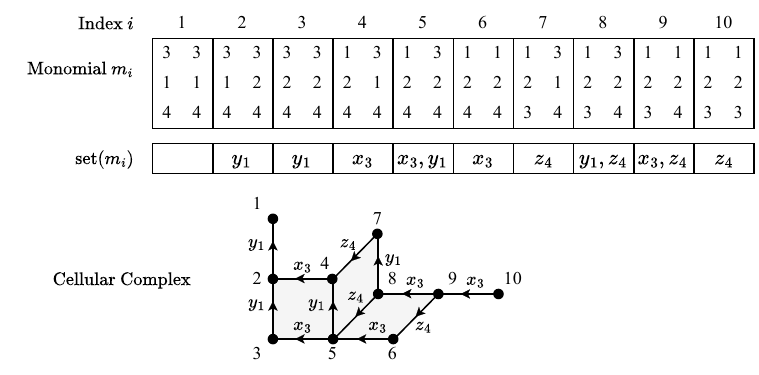}
        \caption{Cellular complex associated to the resolution of $(M_{\a})^2$ where $\a = (2,2)$}
        \label{fig:gr24 cell complex example}
    \end{figure}
\end{example}

\begin{example}\label{example: gr24 2}
    Consider the matching field given by the partition $\a = (1,3)$. The generators of the ideal $M_{\a}$ are $
    x_2 y_1 z_3,\, 
    x_2 y_1 z_4,\,
    x_3 y_1 z_4,\,
    x_2 y_3 z_4.$ 
    The second power $(M_{\a})^2$ is generated by:
    \begin{align*}
        &x_{2}^{2}y_{3}^{2}z_{4}^{2},\,
        x_{2}x_{3}y_{1}y_{3}z_{4}^{2},\,
        x_{2}^{2}y_{1}y_{3}z_{4}^{2},\,
        x_{3}^{2}y_{1}^{2}z_{4}^{2},\,
        x_{2}x_{3}y_{1}^{2}z_{4}^{2},\\
        &x_{2}^{2}y_{1}^{2}z_{4}^{2},\,
        x_{2}^{2}y_{1}y_{3}z_{3}z_{4},\,
        x_{2}x_{3}y_{1}^{2}z_{3}z_{4},\,
        x_{2}^{2}y_{1}^{2}z_{3}z_{4},\,
        x_{2}^{2}y_{1}^{2}z_{3}^{2}.
    \end{align*}
    The CW-complex of the cellular resolution of $(M_{\a})^2$ given in Corollary~\ref{cor: cell res} is shown in Figure~\ref{fig:gr24 cell complex example 2}.

    \begin{figure}
        \centering
        \includegraphics{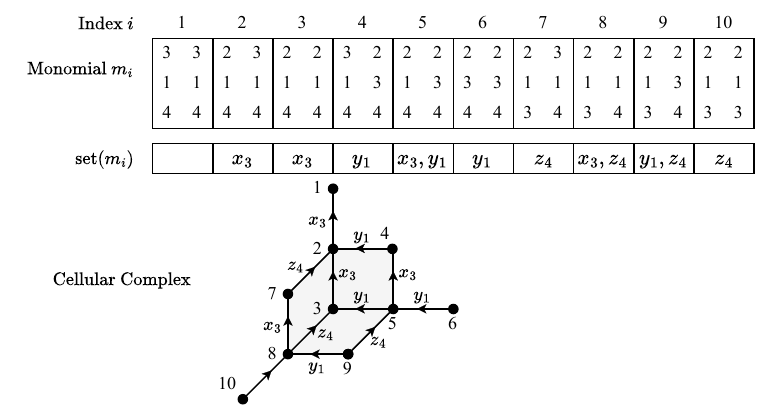}
        \caption{Cellular complex associated to the resolution of $(M_{\a})^2$ where $\a = (1,3)$}
        \label{fig:gr24 cell complex example 2}
    \end{figure}
\end{example}

\begin{remark}
  Examples~\ref{example: gr24 1} and \ref{example: gr24 2} represent monomial degenerations of the same determinantal ideal. They share the same Betti numbers, as reflected in the $f$-vector $(1,10,12,3)$ of the complex. However, despite this similarity, their CW-complexes differ. For instance, in Example~\ref{example: gr24 2}, the CW-complex features three 2-cells, each pair sharing an edge. In contrast, in Example~\ref{example: gr24 1}, there is a pair of two cells that do not share an edge.
\end{remark}

\begin{remark}
    Examples~\ref{example: gr24 1} and \ref{example: gr24 2} serve to demonstrate the definition of the monomial and decomposition function. To ensure that the examples and diagrams are not too long, the monomial ideals are small, i.e., they are generated by four monomials. As a result, these examples have cellular resolutions supported on paths, which coincide with those studied in \cite{cooper2022powers}. Consider the diagonal matching field $B_{(5)}$ whose associated monomials are $x_iy_jz_k$ with $1\leq i<j<k\leq 5$. We note that a cellular resolution of the associated monomial ideal $M_{(5)}$ is not supported on a graph, as its supporting CW-complex has $f$-vector $(1, 10, 15, 6)$. The second power of the matching field ideal $(M_{(5)})^2$ is minimally generated by fifty monomials and has a cellular resolution supported on a CW-complex with $f$-vector $(1, 50, 120, 105, 40)$. See also Example 5 and Figure 1 in \cite{Fatemeh2024} for the CW complex of the associated ideal of \( B_{(3,2)} \) for another such example.  
\end{remark}

\bibliographystyle{abbrv}
\bibliography{Trop.bib}

\bigskip
\bigskip
\noindent 
\footnotesize\small{\textbf{Authors' addresses}

\noindent
School of Mathematics, University of Edinburgh, Edinburgh, United Kingdom \\
E-mail address: {\tt oliver.clarke@ed.ac.uk}

\medskip
\noindent
Department of Computer Science, KU Leuven, Celestijnenlaan 200A, B-3001 Leuven, Belgium\\ 
   Department of Mathematics, KU Leuven, Celestijnenlaan 200B, B-3001 Leuven, Belgium\\  
   UiT – The Arctic University of Norway, 9037 Troms\o, Norway\\
   E-mail address: {\tt fatemeh.mohammadi@kuleuven.be}

\end{document}